\documentclass{amsart}

\usepackage[outer=1.25in, inner=1.25in, top=1.25in, bottom=1.25in]{geometry}
\usepackage{amsmath}
\newtheorem{theorem}{Theorem}[section]

\newtheorem{lemma}[theorem]{Lemma}

\usepackage{hyperref,amssymb,xcolor}
\begin{document}
	
	\title[Base size and minimal degree]{On the base size and minimal degree of transitive groups}
	
	\author[L. Guerra]{Lorenzo Guerra}
	\address{Lorenzo Guerra, Dipartimento di Matematica Pura e Applicata,\newline
		University of Milano-Bicocca, Via Cozzi 55, 20126 Milano, Italy} 
	\email{l.guerra@unimib.it}

	\author[A. Mar\'oti]{Attila Mar\'oti}
	\address{Attila Mar\'oti, Hun-Ren Alfr\'ed R\'enyi Institute of Mathematics, Re\'altanoda Utca 13-15, H-1053, Budapest, Hungary}
	\email{maroti@renyi.hu}

	\author[F. Mastrogiacomo]{Fabio Mastrogiacomo}
	\address{Fabio Mastrogiacomo, Dipartimento di Matematica ``Felice Casorati", University of Pavia, Via Ferrata 5, 27100 Pavia, Italy} 
	\email{fabio.mastrogiacomo01@universitadipavia.it}

	\author[P. Spiga]{Pablo Spiga}
	\address{Pablo Spiga, Dipartimento di Matematica Pura e Applicata,\newline
		University of Milano-Bicocca, Via Cozzi 55, 20126 Milano, Italy} 
	\email{pablo.spiga@unimib.it}
	%\subjclass[2010]{primary 20B15}
	\keywords{base size, minimal degree, transitive, $p$-group, multinomial coefficient}        
	\thanks{The first three authors are members of the GNSAGA INdAM research group and kindly acknowledge their support.\\ The first and fourth authors are funded by the European Union via the Next
		Generation EU (Mission 4 Component 1 CUP B53D23009410006, PRIN 2022, 2022PSTWLB, Group
		Theory and Applications). The second author was supported by the National Research, Development and Innovation Office (NKFIH) Grant No.~K138596, No.~K132951 and Grant No.~K138828.}
	\maketitle	
	\begin{abstract}  
		Let $G$ be a permutation group, and denote with $\mu(G)$ and $b(G)$ its minimal degree and base size respectively. We show that for every $\varepsilon>0$, there exists a transitive permutation group $G$ of degree $n$ with
		\[
			\mu(G)b(G) \geq n^{2-\varepsilon}.
		\]
		We also identify some classes of transitive and intransitive groups whose base size and minimal degree have a smaller upper bound, shared with primitive groups.
	\end{abstract}
\section{Introduction}\label{sec:intro}
	Let $G$ be a finite permutation group acting on a set $\Omega$. The \textit{\textbf{minimal degree}} of $G$, denoted by $\mu(G)$, is the smallest number of elements of $\Omega$ that are moved by any non-identity element of $G$. A base for $G$ is a sequence of points $(\omega_1,\dots,\omega_\ell)$ of $\Omega$ with trivial pointwise stabilizer, that is,
	\[
		G_{\omega_1,\dots,\omega_\ell} = 1.
	\] 
	The \textit{\textbf{base size}} of $G$, denoted by $b(G)$, is the smallest cardinality of a base for $G$. \\
	Both the base size and the minimal degree have been extensively studied, especially in the context of primitive groups. Except for their product, most results in the literature treat these two invariants separately. A simple yet fundamental inequality relating these quantities is the following: if $G$ is a transitive group of degree $n$, then
	\[
		\mu(G) b(G) \geq n,
	\]
	see for example~\cite[Excercise~$3.3.7$]{dixon_mortimer}.\\
	
	Known results on base size provide a natural upper bound for this product. For instance, it is shown in~\cite{MRD} that if $G$ is a primitive group of degree $n$ which is not large-base (i.e., not a wreath product action of two symmetric groups where the first acts on subsets), then
	\[
		b(G) \leq 2+\log n.
	\]
	Using this, we can easily obtain that the bound
	\[
		\mu(G) b(G) \leq n(2+\log n).
	\]
	holds for such primitive groups.\\
	Nonetheless, this bound is not optimal and admits numerous exceptions. A more refined estimate was provided in~\cite{fabio}, where a bound holds for all but one primitive group. Specifically, \cite[Theorem~$1.2$]{fabio} states that if 
	$G$ is a primitive group of degree $n$, different from the Mathieu group of degree $24$, then
	\[
		\mu(G) b(G) \leq n \log n.
	\]
	These results heavily rely on the deep structural knowledge of primitive groups, and much less is known for non primitive groups. \\This is a common fact: while results concerning base size and minimal degree abound for primitive groups, considerably less is known for transitive—and even less for intransitive—groups. Noteworthy in this context is \cite[Theorem~A]{KPS}, where a bound on the order of an arbitrary permutation group is given in terms of its minimal degree.\\
	The goal of this paper is to investigate the product $\mu(G)b(G)$ for imprimitive groups. In Section~\ref{sec:general}, we give evidences that a bound similar to the one of \cite[Theorem~$1.2$]{fabio} holds for some classes of transitive or even intransitive groups. In particular, Lemma~\ref{sec1:l1} shows that the bound holds for a wreath product group endowed with its imprimitive action, while Lemma~\ref{sec1:l2} establishes the bound for quasiprimitive groups, for Sylow subgroups of symmetric groups and for maximal subgroups of symmetric groups.\\
	
	Motivated by the results in Section~\ref{sec:general}, we initially conjectured that a bound similar to the one established in~\cite{fabio} would also hold for all transitive groups. However, to our surprise, this is not the case. Indeed, recall that since for every permutation group $G$ of degree $n$ both $\mu(G)$ and $b(G)$ are at most $n$, then $\mu(G)b(G) \leq n^2$. We show that $\mu(G)b(G)$ can, in fact, be arbitrarily close to $n^2$. Our main result provides a precise characterization of this behavior.
	\begin{theorem}\label{thrm:main}
		For every $\varepsilon>0$, there exists a transitive permutation group of degree $n$ such that
		$$\mu(G)b(G)\ge n^{2-\varepsilon}.$$
	\end{theorem}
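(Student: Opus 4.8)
The plan is to exhibit, for each $\varepsilon$, an imprimitive transitive group in which the minimal degree and the base size are both of order $n^{1-o(1)}$, so that their product approaches $n^2$. The tension to overcome is visible already in the regular action, which has $\mu(G)=n$ but $b(G)=1$: one must keep $\mu(G)$ close to $n$ while simultaneously forcing the base size up, and these two demands pull in opposite directions. I would therefore work inside a group with $r$ blocks of size $m$, so $n=mr$, of the form $G=H\rtimes T\le P\wr T$, where $P\le\mathrm{Sym}(\Delta)$ is a fixed transitive group on a block ($|\Delta|=m$, minimal degree $\mu_P$), $T\le S_r$ is transitive on the blocks, and $H\le P^{\,r}$ is a $T$-invariant subgroup of the base group, thought of as a ``code'' whose coordinates are indexed by the blocks. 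Transitivity of $G$ is immediate from transitivity of $T$ on the blocks together with transitivity inside a block.

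The two invariants are then bounded separately. For the minimal degree: a nontrivial element of the base group lies in $H$ and is nontrivial in at least $w$ coordinates, where $w$ is the least number of nonidentity coordinates of a nonidentity element of $H$, and each such coordinate moves at least $\mu_P$ points, so the support has size at least $w\mu_P$; an element with nontrivial top part moves at least $\mu_T$ entire blocks, contributing at least $\mu_T m$. Hence $\mu(G)\ge\min(w\mu_P,\mu_T m)$, and if $T$ is chosen regular then every element with nontrivial top part is fixed-point-free on the blocks and moves all $n$ points, so the minimal degree is governed entirely by the code. For the base size: an element of $H$ fixing every chosen point must have trivial coordinate on every block met by the base, so if the base meets the block-set $S$ then the projection of $H$ to the coordinates in $S$ is injective; this forces $|S|\ge\log_{|P|}|H|$, and since each base point lies in a single block we obtain $b(G)\ge\log_{|P|}|H|$.

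Taking $P=C_p$ (so $m=\mu_P=p$) and $T$ regular (so $\mu_T=r\ge w$), these estimates combine to give
\[
\mu(G)\,b(G)\ \ge\ pw\cdot\log_p|H|\ =\ \frac{R\delta}{p}\,n^2,
\]
where $R=\log_p|H|/r$ and $\delta=w/r$ are the rate and relative minimum weight of $H$. The theorem thus reduces to producing, for each $\varepsilon$, a family of codes $H$ invariant under a transitive group $T$ and satisfying $R\delta\ge p\,n^{-\varepsilon}$. This is where I expect the main obstacle to lie: transitivity of $G$ forces $H$ to be invariant under a transitive permutation group, and such symmetry is in tension with having large dimension (needed for $b(G)$) and large minimum weight (needed for $\mu(G)$) at the same time. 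I would resolve this with an explicit transitively symmetric family whose rate times relative distance decays only subpolynomially in $r$ — for instance by tensoring a single fixed code with transitive automorphism group with itself, which preserves transitivity of the symmetry while degrading $R\delta$ by only a fixed exponent — and I anticipate that the explicit estimation of $w$, of $|H|$, and of the number of blocks needed to pin down $H$ is precisely where multinomial coefficients enter the computation.
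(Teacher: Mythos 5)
Your framework is, at bottom, the paper's construction viewed through a coding--theoretic lens, and the two inequalities you derive are exactly the content of Lemmas~\ref{l:1} and~\ref{l:2}: the paper works with $G_b=H\rtimes V\le \mathbb{F}_p\wr V$, where $V=\mathbb{F}_p^a$ acts regularly and $H$ is a $V$-invariant linear code inside the base group $\mathbb{F}_p^V$, and it proves that $\mu(G_b)$ equals $p$ times the minimum weight of $H$ while $b(G_b)=\dim_{\mathbb{F}_p}H$. Your derivations of both bounds are correct (for $P=C_p$ regular, the pointwise stabilizer of a base meeting the blocks in $S$ is the kernel of the projection $H\to\mathbb{F}_p^S$, and a nontrivial top component is fixed-point-free once $T$ is regular), and your guess that multinomial coefficients enter through the dimension count is exactly right: the paper takes $H$ to be the generalized Reed--Muller code of order $d$, which arises for free as the term $B_{a(p-1)-d}=[B,V,\dots,V]$ of the lower central series (so $T$-invariance is automatic), has minimum weight $(p-s)p^{a-r-1}$ by a classical theorem on finite fields, and has dimension $\sum_{k\le d}\binom{a}{k}^{(p-1)}$; an asymptotic estimate for these extended binomial coefficients, with $d\approx\lfloor\sqrt{p}\rfloor a$ and then $p\to\infty$, finishes the proof. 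So the reduction to ``find transitively invariant codes with $R\delta\ge n^{-\varepsilon}$'' is faithful to the paper.

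The gap is in your proposed resolution of that reduction. Tensoring multiplies rates and relative distances, so the $k$-th tensor power of a single fixed seed code $C_0$ of length $r_0$ has $R\delta=(R_0\delta_0)^k=r^{-\varepsilon_0}$ with $\varepsilon_0=\log\bigl(1/(R_0\delta_0)\bigr)/\log r_0$ a fixed positive constant (positive because $R_0\delta_0<1$ for every nontrivial code). This proves the theorem only for $\varepsilon\ge\varepsilon_0$, not for all $\varepsilon$; the phrase ``degrading $R\delta$ by only a fixed exponent'' conceals the fact that this fixed exponent must itself be driven below $\varepsilon$, which forces the seed code to vary with $\varepsilon$. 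What is missing is therefore the existence, for each $\varepsilon>0$, of a seed code with transitive (indeed regular) invariance group and $R_0\delta_0>r_0^{-\varepsilon}$. This is true and fixable: for instance the length-$p$ Reed--Solomon code over $\mathbb{F}_p$ of rate $1/2$ is invariant under $\mathrm{AGL}_1(p)$, hence under the regular translation group $\mathbb{F}_p^{+}$, and has $R_0\delta_0$ close to $1/4$, so choosing $p>4^{1/\varepsilon}$ and tensoring completes your argument --- note that these tensor powers are again low-degree polynomial codes on $\mathbb{F}_p^k$, i.e.\ close relatives of the Reed--Muller codes the paper uses. With that supplement your route is viable and arguably more elementary than the paper's multinomial asymptotics, but as written the decisive existence statement is asserted rather than proved.
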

	The proof of Theorem~\ref{thrm:main} is presented in Section~\ref{sec:proof}, and it relies on the descending series of a wreath product of two vector spaces over the field with $p$ element, for some prime number $p$.
\section{The bound for some permutation groups}\label{sec:general}

\begin{lemma}
	\label{sec1:l1}	
	Let $G = H \wr T$ be a finite transitive permutation group acting on a set $\Omega$ of size $n$ where $H$ is a primitive permutation group acting on a set $\Sigma$ which is a block for $G$ in $\Omega$ and where $T$ is a nontrivial transitive permutation group acting on the system of imprimitivity defined by $\Sigma$. Then $\mu(G) b(G) \leq n \log n$. 	
\end{lemma}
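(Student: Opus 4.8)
The plan is to compute $\mu(G)$ and $b(G)$ exactly in terms of the corresponding invariants of $H$ and the degree $k$ of $T$, and then to feed the outcome into the primitive bound \cite[Theorem~$1.2$]{fabio}. Write $m = |\Sigma|$ and let $k$ be the degree of $T$, so that $\Omega$ decomposes into $k$ pairwise disjoint blocks $\Sigma_1,\dots,\Sigma_k$, each a copy of $\Sigma$, with $n = mk$. I would identify $G$ with $H^k \rtimes T$, where the base group $H^k = H_1 \times \cdots \times H_k$ has $H_i$ acting on $\Sigma_i$ and trivially elsewhere, and $T$ permutes the blocks.

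First I would show that $\mu(G) = \mu(H)$. On the one hand, an element of $H_1$ realizing $\mu(H)$ moves exactly $\mu(H)$ points of $\Omega$, so $\mu(G) \le \mu(H)$. On the other hand, any nontrivial $g = (h_1,\dots,h_k;\sigma)$ either has $\sigma \neq 1$, in which case $\sigma$ displaces at least two blocks and $g$ moves at least $2m \ge \mu(H)$ points, or has $\sigma = 1$ with some $h_i \neq 1$, in which case $g$ moves at least $\mu(H)$ points inside $\Sigma_i$. Hence $\mu(G) = \mu(H)$, the minimal degree being realized by an element supported on a single block.

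Next I would prove $b(G) = k\,b(H)$. For the lower bound, since $H_i$ moves only points of $\Sigma_i$, the intersection of $H_i$ with the pointwise stabiliser of a base $B$ equals the pointwise stabiliser in $H$ of $B \cap \Sigma_i$; triviality forces $B \cap \Sigma_i$ to be a base of $H$, so $|B \cap \Sigma_i| \ge b(H)$, and summing over the $k$ blocks gives $|B| \ge k\,b(H)$. For the upper bound, choose in each $\Sigma_i$ a base $B_i$ of $H$ and set $B = \bigcup_i B_i$. If $g = (h_1,\dots,h_k;\sigma)$ fixes $B$ pointwise, then each nonempty $B_i$ forces $\sigma(i) = i$ since the blocks are disjoint, whence $\sigma = 1$; then each $h_i$ fixes the base $B_i$ pointwise, so $h_i = 1$ and $g = 1$. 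The one subtlety here, which I would emphasise, is that for a \emph{pointwise} stabiliser the mere presence of a point in every block already kills the top group $T$, so no points beyond the per-block bases are required.

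Combining the two computations yields $\mu(G)\,b(G) = k\,\mu(H)\,b(H)$. As $H$ is primitive of degree $m$, I would invoke \cite[Theorem~$1.2$]{fabio}: if $H$ is not the Mathieu group of degree $24$, then $\mu(H)\,b(H) \le m \log m$, and therefore
\[
  \mu(G)\,b(G) = k\,\mu(H)\,b(H) \le k\,m \log m = n \log m \le n \log n,
\]
using $m \le n$. The main obstacle is the excluded case $H = M_{24}$, where $\mu(H)\,b(H) = 16 \cdot 7 = 112$ exceeds $m \log m$. Here I would argue directly from the fact that $T$ is nontrivial, so $k \ge 2$ and $n = 24k \ge 48$: the desired inequality $112\,k \le 24\,k \log(24k) = n \log n$ reduces to $\log(24k) \ge 14/3$, which holds for every $k \ge 2$. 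This disposes of the exceptional case and completes the proof.
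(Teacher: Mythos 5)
Your proposal is correct and takes essentially the same route as the paper: bound $\mu(G)$ above by $\mu(H)$ and $b(G)$ above by (number of blocks)$\,\cdot\, b(H)$, feed these into \cite[Theorem~$1.2$]{fabio} for the primitive group $H$, and dispose of the exceptional case $H = M_{24}$ by the same numerical estimate $16\cdot 7 \le 24\log 48$. The only difference is that you additionally prove the reverse inequalities $\mu(G)\ge\mu(H)$ and $b(G)\ge k\,b(H)$, which are not needed for the upper bound.
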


\begin{proof}
	Put $n = |\Omega|$ and $k = |\Sigma|$. We have $b(G) \leq b_{\Sigma}(H) (n/k)$ and $\mu(G) \leq \mu_{\Sigma}(H)$ where $b_{\Sigma}(H)$ denotes the minimal base size of $H$ acting on $\Sigma$ and $\mu_{\Sigma}(H)$ denotes the minimal degree of $H$ acting on $\Sigma$. If $H$ is different from the $5$-transitive Mathieu group of degree $24$, then $\mu(G)b(G) \leq (k \log k)(n/k) = n \log k \leq n \log n$, otherwise $\mu(G)b(G) \leq (16 \cdot 7) (n/24) \leq n \log 48 \leq n \log n$ by~\cite[Theorem~$1.2$]{fabio}
\end{proof}	

\begin{lemma}\label{sec1:l2}
	Let $G$ be a permutation group acting on a finite set $\Omega$ of size $n$. Let $G$ be different from the $5$-transitive Mathieu group of degree $24$. If $G$ 
	\begin{enumerate}
	\item is quasiprimitive, or 
	
	\item is a Sylow subgroup of $ \mathrm{Sym}(\Omega) $ or 
	
	\item is a maximal subgroup therein, 
	\end{enumerate} 	
	then $\mu(G) b(G) \leq n \log n$.
\end{lemma}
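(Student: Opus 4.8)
The plan is to treat the three families separately, in each case reducing the estimate either to the primitive bound \cite[Theorem~1.2]{fabio}, to Lemma~\ref{sec1:l1}, or to an elementary computation. The maximal subgroups are the most transparent: by the classification of maximal subgroups of $\mathrm{Sym}(\Omega)$ there are three possibilities. If $G$ is intransitive then $G=\mathrm{Sym}(\Delta)\times\mathrm{Sym}(\Omega\setminus\Delta)$ for some $\Delta$ with $1\le|\Delta|<n/2$; a transposition shows $\mu(G)=2$, while concatenating bases of the two symmetric factors gives $b(G)=n-2$, so $\mu(G)b(G)=2(n-2)\le n\log n$ once $n\ge 4$, the finitely many smaller degrees being checked directly. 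If $G$ is transitive and imprimitive then $G=\mathrm{Sym}(\Sigma)\wr\mathrm{Sym}(\Delta)$ in its imprimitive action, with $\mathrm{Sym}(\Sigma)$ primitive on a block and $\mathrm{Sym}(\Delta)$ nontrivial and transitive, so Lemma~\ref{sec1:l1} applies directly. If $G$ is primitive then, being distinct from the Mathieu group of degree $24$, it obeys the bound by \cite[Theorem~1.2]{fabio}.

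For a Sylow subgroup $P=G$ of $\mathrm{Sym}(\Omega)$ I would first note that, assuming $n\ge p$, every nontrivial element is a product of cycles of $p$-power length, so $\mu(P)=p$ is realised by a single $p$-cycle. For the base size I use the universal bound $b(P)\le\log_2|P|$, valid because a minimal base produces a strictly descending chain of point stabilisers; combined with Legendre's formula $\log_2|P|=\tfrac{n-s_p(n)}{p-1}\log_2 p$ (with $s_p(n)$ the sum of the base-$p$ digits of $n$) this yields $\mu(P)b(P)\le\tfrac{p\log_2 p}{p-1}\,n$. Since $\tfrac{p}{p-1}\le 2$, the leading coefficient is at most $\log_2(p^2)\le\log_2 n$ whenever $n\ge p^2$, which settles that range. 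When $p\le n<p^2$ the group $P$ is elementary abelian of rank $\lfloor n/p\rfloor$, a direct product of disjoint $p$-cycles, so $b(P)=\lfloor n/p\rfloor$ and $\mu(P)b(P)=p\lfloor n/p\rfloor\le n\le n\log_2 n$.

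For a quasiprimitive $G$ I distinguish two situations. If $G$ is primitive, \cite[Theorem~1.2]{fabio} applies at once. If $G$ is imprimitive, pick a maximal subgroup $M$ with $G_\alpha\le M<G$ and let $\mathcal B$ be the block system of cosets of $M$, of size $m$ and block size $c=n/m\ge 2$. The kernel of the $G$-action on $\mathcal B$ is normal and fixes each block setwise, hence is intransitive, so by quasiprimitivity it is trivial and $G$ acts faithfully and primitively on $\mathcal B$. Lifting a base of $\mathcal B$ to one point per selected block gives $b(G)\le b_{\mathcal B}(G)$. I then split on $b_{\mathcal B}(G)$: if $b_{\mathcal B}(G)\le\log n$, then $\mu(G)b(G)\le n\,b(G)\le n\,b_{\mathcal B}(G)\le n\log n$ using only the trivial bound $\mu(G)\le n$; the $M_{24}$ quotient also lands here, since the least index of a proper subgroup of its point stabiliser forces $n$ to be large enough that $\log n\ge 7=b_{\mathcal B}(G)$.

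The delicate case is $b_{\mathcal B}(G)>\log n$, where $\mu(G)\le n$ is too crude—indeed the minimal degree does not transfer through blocks, as $\mathrm{Alt}(5)$ acting on the twelve cosets of a subgroup of order $5$ already shows $\mu(G)=10>8=c\,\mu_{\mathcal B}(G)$. Here the base-size classification of primitive groups forces the quotient on $\mathcal B$ to be large-base, that is, a symmetric (or alternating) group acting on subsets or in a product action; the task is then to prove that in this regime $\mu(G)$ is correspondingly small, of order $n\log n/b(G)$, so that the product stays below $n\log n$. I expect this to require pushing the explicit large-base estimates of \cite[Theorem~1.2]{fabio} through the imprimitive covering $\Omega\to\mathcal B$, and it is the one step I regard as the genuine obstacle; the small-base quasiprimitive case, the Sylow case, and all three types of maximal subgroup are by comparison routine.
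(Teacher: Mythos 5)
Your treatments of cases (2) and (3) are correct and complete. For maximal subgroups you follow essentially the paper's route (intransitive: $\mu(G)=2$, $b(G)=n-2$; transitive imprimitive: Lemma~\ref{sec1:l1}; primitive: \cite[Theorem~1.2]{fabio}). For Sylow subgroups you take a genuinely different and more self-contained path: the paper decomposes $G=G_1\times\cdots\times G_r$ into transitive constituents, applies Lemma~\ref{sec1:l1} to each with $H=C_p$, and sums the base sizes, whereas you deduce everything from $\mu(P)=p$, the universal bound $b(P)\le\log_2|P|$, and Legendre's formula. Both arguments work; yours avoids the wreath-product lemma at the cost of the small case split at $n<p^2$, and it makes the constant $\tfrac{p\log_2 p}{p-1}$ explicit.

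The genuine gap is case (1), and you have located it exactly. You stop at the quasiprimitive imprimitive groups whose faithful primitive action on a maximal block system $\mathcal{B}$ has base size exceeding $\log n$, and your reason for stopping is the correct observation that minimal degree does not descend to block systems in the useful direction: your $\mathrm{Alt}(5)$ example on $12$ points, with $\mu(G)=10>8=k\,\mu_{\mathcal B}(G)$ for the system of six blocks of size $k=2$, is accurate. You should know that the paper's entire proof of (1) consists of the assertion $\mu(G)\le \mu_{\mathcal B}(G)\,k$, which is precisely what your example refutes; in fact, since the action on $\mathcal{B}$ is faithful and every moved block contributes $k$ moved points, the inequality that always holds is the reverse one, $\mu(G)\ge k\,\mu_{\mathcal B}(G)$. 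So the obstacle you flag is real, it is not resolved by the paper as written, and completing your sketch (for instance by a direct analysis of quasiprimitive groups whose top action is large-base, or by finding a correct replacement for the displaced inequality) would be needed both to finish your proof and to repair the published one.
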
	

\begin{proof}
	First let $G$ be transitive. If $G$ is primitive, then the claim follows from~\cite[Theorem~$1.2$]{fabio}. Let $G$ be different from a primitive group. Let $G$ be quasiprimitive. Let $\mathcal{B}$ be a maximal system of blocks in $\Omega$ defined from a nontrivial block $\Sigma$. Let $k = |\Sigma|$. Since $G$ is quasiprimitive, $b(G) \leq b_{\mathcal{B}}(G)$ and $\mu(G) \leq \mu_{\mathcal{B}}(G) k$. This gives $\mu(G)b(G) \leq (n/k) \log(n/k) k$ by~\cite[Theorem~$1.2$]{fabio}, which is at most $n \log n$. This proves (1).
	
	We continue to assume that $G$ is transitive. If $G$ is a (transitive) Sylow $p$-subgroup of $\mathrm{Sym}(\Omega)$ for some prime $p$, then (2) follows from Lemma \ref{sec1:l1} by taking $H$ to be the cyclic group of order $p$ and $T$ a Sylow $p$-subgroup of the symmetric group of degree $n/p$. If $G$ is a maximal imprimitive subgroup of $\mathrm{Sym}(\Omega)$, then $\mu(G) b(G) \leq n \log n$, again by Lemma~\ref{sec1:l1} by taking both $H$ and $T$ to be symmetric groups. This completes the proof of (3) in case $G$ is transitive. 
	
    Let $G$ act intransitively on $\Omega$. Let $G = G_{1} \times \cdots \times G_r$ be a Sylow subgroup of $\mathrm{Sym}(\Omega)$ where each $G_i$ is a transitive Sylow subgroup of $\mathrm{Sym}(\Omega_{i})$ for subsets $\Omega_i$ of $\Omega$ partitioning $\Omega$. We may assume that each $G_i$ is nontrivial. We have $\mu(G) \leq \min_{i} \{ \mu_{\Omega_i}(G_{i}) \}$ and $b(G) \leq \sum_{i=1}^{r} b_{\Omega_{i}}(G_{i})$. This and the previous paragraph give $\mu(G)b(G) \leq \sum_{i=1}^{r} |\Omega_i| \log |\Omega_i| \leq \sum_{i=1}^{r} |\Omega_i| \log n = n \log n$. This completes the proof of (2). Similarly, if $G$ is maximal (and intransitive) in $\mathrm{Sym}(\Omega)$, then $\mu(G)b(G) = 2 (n-2) \leq n \log n$, completing the proof of (3). 
	\end{proof}

\section{Proof of Theorem~\ref{thrm:main}}\label{sec:proof}
Let $a$ be a positive integer, let $p$ a prime number, and let $V$ be an $a$-dimensional vector space over the field  $\mathbb{F}_p$ with $p$ elements. We regard $V$ as a transitive regular subgroup of the symmetric group $\mathrm{Sym}(V)$. 

Next, we let $W = \mathbb{F}_p \mathrm{wr} V$ and $B = \mathbb{F}_p^V$ be the base group of the wreath product $W$. We regard $W$ as a transitive subgroup of the symmetric group $\mathrm{Sym}(\mathbb{F}_p \times V)$ endowed with its imprimitive action of degree $n=|\mathbb{F}_p\times V|=p^{a+1}$.

We let $B_0 = B$ and, for each positive integer $i$, we define recursively
\[
B_i = [B_{i-1}, V].
\]
An element of $B_0 = \mathbb{F}_p^V$ is a function from $V$ to $\mathbb{F}_p$. For what follows, it is useful to identify $B_0$ with a certain coordinate ring.

Let $X_1, \ldots, X_a$ be indeterminates, and consider the polynomial ring $\mathbb{F}_p[X_1, \ldots, X_a]$ with coefficients in $\mathbb{F}_p$. Now consider the evaluation map
\[
\mathbb{F}_p[X_1, \ldots, X_a] \to \mathbb{F}_p^V=B
\]
that sends a polynomial $f(X_1, \ldots, X_a)$ to the function which maps each $v = (v_1, \ldots, v_a) \in V=\mathbb{F}_p^a$ to $f(v_1, \ldots, v_a)\in \mathbb{F}_p$. This is a surjective map whose kernel is the ideal
\[
(X_1^p - X_1, \ldots, X_a^p - X_a).
\]
We write $x_i$ for the image of $X_i$ in the quotient ring, and identify $B_0$ with the coordinate ring $\mathbb{F}_p[x_1, \ldots, x_a]$. In particular, each element $f$ of $B_0$ is a polynomial function
\begin{align}\label{eq:1}
f = \sum_{\lambda_1,\ldots,\lambda_a=0}^{p-1} a_{\lambda_1,\ldots,\lambda_a} x_1^{\lambda_1}\cdots x_a^{\lambda_a},
\end{align}
where $a_{\lambda_1,\ldots,\lambda_a}\in\mathbb{F}_p$ for each $a$tuple $(\lambda_1,\ldots,\lambda_a)$. For not making the notation too cumbersome, given an $a$tuple $\bar{\lambda}=(\lambda_1,\ldots,\lambda_a)$, we denote with $x_{\bar{\lambda}}$ the monomial $\prod_i x_i^{\lambda_i}$.

Now, let $f \in B_0$ be as in~\eqref{eq:1} and let $v \in V$. Then,
\[
[f, v] =-f+ f^v =\sum_{\bar{\lambda}}  a_{\bar{\lambda}}(x_{\bar{\lambda}}^v - x_{\bar \lambda}).
\]
Observe that
\begin{align*}
(x_{\bar \lambda}^v - x_{\bar \lambda})(w_1, \ldots, w_a) = x_{\bar\lambda}(v_1 + w_1, \ldots, v_a + w_a) - x_{\bar\lambda}(w_1, \ldots, w_a)\\
=\prod_{i}(v_i+w_i)^{\lambda_i}-\prod_{i}w^{\lambda_i},
\end{align*}
and hence,
\[
x_{\bar \lambda}^v - x_{\bar \lambda} = \prod_{i}(x_i + v_i)^{\lambda_i} - x_{\bar \lambda}.
\]
Expanding the product $\prod_{i}(x_i + v_i)^{\lambda_i}$, we see that the term $\prod_{i} x_i^{\lambda_i}$ cancels out with $x_{\bar \lambda}$. Therefore, the commutator of an element $f \in B_0$ of degree $d$ and an element $v\in  V$ is an element of $B_0$ degree at most $d-1$. From this, arguing inductively, it immediately follows that
\[
B_d \subseteq \{f \in B_0 \mid \deg f \le a(p-1) - d\}.
\]
Observe now that, for every $1\le i \le a$, we have
\begin{align}\label{eq:2}
x_{\bar{\lambda}+e_i} =  \prod_{j \neq i} x_j^{\lambda_j}(x_i+1) - \prod_{j=1}^i x_j^{\lambda_j} = x_{\bar{\lambda}}^{e_i}-x_{\bar{\lambda}},
\end{align}
where $\{e_1,\ldots,e_a\}$ is the canonical basis of $V$. Now,~\eqref{eq:2} and an elementary induction yields 
\begin{align}\label{eq:0}B_d=\{f\in B\mid \deg f\le a(p-1)-d\}.
\end{align}

Observe that $B_{a(p-1)}$ consists of all constant functions. We have
$$0=B_{a(p-1)+1}<B_{a(p-1)}<\cdots <B_1<B_0=B=\mathrm{F}_p^V.$$
For every $d\in \{0,\ldots,a(p-1)\}$, we let $$G_d=B_{a(p-1)-d}\rtimes V.$$
In particular, $G_0=B_{a(p-1)}\rtimes V=B_{a(p-1)}\times V$ acts regularly on its domain. 
Observe that if $b_1\le b_2$, then $G_{b_1}\ge G_{b_2}$. In particular, $G_b$ is transitive for every $b\in \{0,\ldots,a(p-1)\}$.

\begin{lemma}\label{l:1}Let $b\in \{0,\ldots,a(p-1)\}$ and write $b=r(p-1)+s$, where $r,s\in\mathbb{N}$ and $0\le s< p-1$. We have $\mu(G_b)=(p-s)p^{a-r}$. In particular, if $b=r(p-1)$, then $\mu(G_{r(p-1)})=p^{a-r+1}$.
\end{lemma}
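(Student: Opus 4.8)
The plan is to read off the minimal degree by deciding which elements of $G_b$ move the fewest points. By~\eqref{eq:0} we have $B_{a(p-1)-b} = \{f \in B \mid \deg f \le b\}$, so every element of $G_b$ can be written as $fu$ with $f \in B$, $\deg f \le b$, and $u \in V$. Since the top group $V$ acts on $\mathbb{F}_p \times V$ by translating the second coordinate, $v \mapsto v+u$, any element with $u \neq 0$ fixes no point, and therefore has support $p^{a+1} = n$; as $(p-s)p^{a-r} \le p^{a-r+1} \le p^{a+1}$ for $r \ge 0$, such elements never realise the minimum. Thus it suffices to treat the base elements ($u = 0$). A nonzero $f \in B$ acts within each block $\{(c,v)\mid c \in \mathbb{F}_p\}$ by $c \mapsto c + f(v)$, fixing $(c,v)$ exactly when $f(v) = 0$, so its support equals $p \cdot |\{v \in V \mid f(v) \neq 0\}|$. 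This yields
\[
\mu(G_b) = p \cdot \min\{\,|\{v \in V \mid f(v)\neq 0\}| : f \in B \setminus \{0\},\ \deg f \le b\,\}.
\]

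The quantity inside the minimum is precisely the smallest number of non-zeros of a nonzero polynomial function of degree at most $b$ on $\mathbb{F}_p^a$, i.e.\ the minimum Hamming weight of the generalized Reed--Muller code of order $b$ in $a$ variables over $\mathbb{F}_p$. I would then invoke the classical formula (due to Kasami, Lin and Peterson, and to Delsarte, Goethals and MacWilliams): writing $b = r(p-1)+s$ with $0 \le s < p-1$, this minimum weight equals $(p-s)p^{a-r-1}$. Multiplying by $p$ gives $\mu(G_b) = (p-s)p^{a-r}$, and setting $s=0$ yields the stated special case $\mu(G_{r(p-1)}) = p^{a-r+1}$.

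The upper bound behind the weight formula is a direct construction, which I would include since it exhibits the extremal base element explicitly: taking distinct $c_1,\dots,c_s \in \mathbb{F}_p$ (possible as $s < p$), the polynomial
\[
f = \Big(\prod_{i=1}^{r}(1 - x_i^{p-1})\Big)\prod_{j=1}^{s}(x_{r+1} - c_j)
\]
has degree $r(p-1)+s = b$ and is nonzero exactly where $x_1 = \cdots = x_r = 0$, $x_{r+1} \notin \{c_1,\dots,c_s\}$, and $x_{r+2},\dots,x_a$ are arbitrary, a set of size $(p-s)p^{a-r-1}$ (the case $r=a$, $s=0$ being the indicator of the origin). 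Hence $\mu(G_b) \le (p-s)p^{a-r}$.

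The main obstacle is the matching lower bound: every nonzero $f$ with $\deg f \le b$ is nonzero on at least $(p-s)p^{a-r-1}$ points. I would prove this by induction on $a$, the base case $a=1$ being the elementary fact that a nonzero univariate polynomial of degree $\le s < p-1$ has at most $s$ roots and hence at least $p-s$ non-zeros. For the inductive step I would slice $V = \mathbb{F}_p^a$ into the $p$ parallel hyperplanes $x_a = c$, write $f = \sum_j f_j\, x_a^{\,j}$, and bound the total number of non-zeros from below by combining the inductive estimates on the hyperplanes where the restriction $f|_{x_a=c}$ is nonzero with a one-dimensional estimate in the $x_a$-direction. The delicate part is tracking how the pair $(r,s)$ transforms under restriction, which requires care precisely when $b$ is large relative to $a-1$; this bookkeeping is the technical heart of the argument, and the cleanest route for the paper is simply to cite the known minimum-distance formula for generalized Reed--Muller codes.
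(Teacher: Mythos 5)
Your proposal is correct and follows essentially the same route as the paper: reduce to elements of the base group (since any element with nontrivial $V$-part is fixed-point-free), observe that the support of $f$ is $p$ times the number of points where $f$ does not vanish, and then invoke the known minimum-weight formula for polynomial functions of bounded degree on $\mathbb{F}_p^a$ (the paper cites the same fact from Hou's \emph{Lectures on Finite Fields}, Theorem~5.11, rather than the original Kasami--Lin--Peterson/Delsarte--Goethals--MacWilliams references). Your explicit extremal polynomial and sketched inductive lower bound are extra detail the paper omits, but the argument is the same.
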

\begin{proof}
Let $g\in G_b$ such that the support of $g$ has cardinality $\mu(G_b)$. As $g\in B_0\rtimes V$, we may write $g=fv$, for some $v\in V$ and for some $f\in B_0$ with $\deg f\le b$, by~\eqref{eq:0}. If $v\ne 0$, then $g$ acts fixed point freely. Therefore, we may suppose that $v=0$ and $g=f\in B_{a(p-1)-b}$. 

Let $Z(f)=\{v\in V\mid f(v)=0\}$. Now, let $(x,v)$ be in the domain of $G_b$, from the definition of the wreath product we deduce that
$$(x,v)^f=x+f(v).$$
In particular, if $v\in Z(f)$, then $g=f$ fixes all the points of the form $(x,v)$; whereas, if $v\in V\setminus Z(f)$, then $f$ acts as a cycle of length $p$ on $\{(x,v)\mid x\in\mathbb{F}_p\}$. 
This shows that $$\mu(G)=p(|V\setminus Z(f)|).$$
Now, the result follows from~\cite[Theorem~5.11]{ffields}. 
\end{proof}

Next, we compute the cardinality of $|G_b|$. To this end, let $x$ be an indeterminate and consider the polynomial
$$p(x)=(1+x+\cdots+x^{p-1})^a\in \mathbb{Z}[x].$$
This polynomial has degree $(p-1)a$. Actually, the polynomial $p(x)$ enumerates something very important for our example. Let
$$p(x)=\sum_{k=0}^{(p-1)a}{a\choose k}^{(p-1)}x^k$$
be the expansion of $p(x)$ in its nomomials. The coefficients ${a\choose k}^{(p-1)}$ are usually called the extended binomial coefficients or multinomial coefficients. They do not have a standard notation, and we use the notation from~\cite{binomial}. When $p=2$, the extended binomial coefficients are equal to the usual binomial coeffiecients. We also give another example
$$(1+x+x^2)^4=x^8 + 4x^7 + 10x^6 + 16x^5 + 19x^4 + 16x^3 + 10x^2 + 4x + 1.$$
In particular, ${4\choose 6}^{(2)}={4\choose 2}^{(2)}=10$ and ${4\choose 4}^{(2)}=19$.

From the definition of $p(x)$, we see that ${a\choose k}^{(p-1)}$ counts the number of $a$tuples $(\lambda_1,\ldots,\lambda_a)$ with $k=\sum_{i=0}^{a}\lambda_i$ and with $0\le \lambda_i\le p-1$, $\forall i\in \{0,\ldots,a\}$. Thus,~\eqref{eq:0} gives
$$\dim_{\mathbb{F}_p}(B_{a(p-1)-k}/B_{a(p-1)-k+1})={a\choose k}^{(p-1)}.$$
Therefore,
$$\dim_{\mathbb{F}_p}B_{a(p-1)-b}=\sum_{k=0}^b{a\choose k}^{(p-1)}.$$
\begin{lemma}\label{l:2}For every $b\in \{0,\ldots,a(p-1)\}$, $b(G_b)=\sum_{k=0}^b{a\choose k}^{(p-1)}$.
\end{lemma}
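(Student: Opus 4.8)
The plan is to reduce the computation of $b(G_b)$ to a statement in linear algebra about the base group $U:=B_{a(p-1)-b}$, and then to establish the claimed value by a matching pair of bounds. First I would pin down the pointwise stabilizer of a tuple of points. Writing a typical element of $G_b$ as $g=fv'$ with $f\in U$ and $v'\in V$, its action on $\Omega=\mathbb{F}_p\times V$ sends $(x,w)$ to $(x+f(w),w+v')$. Hence $g$ fixes $(x,w)$ if and only if $v'=0$ and $f(w)=0$; in particular the $\mathbb{F}_p$-coordinate of a point plays no role, and a single point already forces $v'=0$. Consequently, for any points $(x_1,v_1),\dots,(x_\ell,v_\ell)$ with $\ell\ge 1$, the pointwise stabilizer in $G_b$ is exactly $\{f\in U \mid f(v_i)=0 \text{ for all } i\}$. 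Thus a base corresponds to a choice of points $v_1,\dots,v_\ell\in V$ for which the evaluation map $U\to\mathbb{F}_p^{\ell}$, $f\mapsto(f(v_1),\dots,f(v_\ell))$, is injective, and $b(G_b)$ equals the least such $\ell$.

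With this reduction in hand, recall from the discussion preceding the lemma that $\dim_{\mathbb{F}_p}U=\sum_{k=0}^b{a\choose k}^{(p-1)}=:D$. For the lower bound, note that the evaluation map above is $\mathbb{F}_p$-linear with source of dimension $D$ and target of dimension at most $\ell$ (indeed at most the number of distinct $v_i$); injectivity therefore forces $\ell\ge D$, so $b(G_b)\ge D$.

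For the upper bound I would argue via the dual space, which avoids exhibiting an explicit configuration of points. Since a function is determined by its values and $\dim_{\mathbb{F}_p}B_0=p^a=|V|$, the evaluation map $B_0\to\mathbb{F}_p^V$ is an isomorphism, so the functionals $\mathrm{ev}_v\colon f\mapsto f(v)$, for $v\in V$, span the dual space $B_0^{*}$. Restricting along the inclusion $U\hookrightarrow B_0$ gives a surjection $B_0^{*}\to U^{*}$ carrying $\mathrm{ev}_v$ to $\mathrm{ev}_v|_U$; hence the restricted functionals $\{\mathrm{ev}_v|_U\}_{v\in V}$ span $U^{*}$. I can therefore extract $D$ of them, say $\mathrm{ev}_{v_1}|_U,\dots,\mathrm{ev}_{v_D}|_U$, forming a basis of $U^{*}$. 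A family of functionals spanning $U^{*}$ has trivial common kernel, so the only $f\in U$ vanishing at all of $v_1,\dots,v_D$ is $f=0$; the corresponding points form a base of size $D$, giving $b(G_b)\le D$ and hence equality.

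The routine verifications are the stabilizer computation and the dimension count (the latter already recorded before the lemma). The one step requiring care, and what I would flag as the main obstacle, is the upper bound: naively one expects to need a clever explicit point set realizing injectivity on the degree $\le b$ polynomials, but the key observation is that the totality of evaluation functionals already spans $U^{*}$, so a basis may simply be selected from them. I would make sure the correspondence between bases of $G_b$ and spanning families of evaluation functionals is stated cleanly, since that is precisely where the $\mathbb{F}_p$-coordinate and the top-group component $V$ are shown to contribute nothing beyond forcing $v'=0$.
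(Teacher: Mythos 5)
Your proof is correct and follows essentially the same route as the paper: both arguments reduce the pointwise stabilizer to the subspace of $U=B_{a(p-1)-b}$ cut out by evaluation at the $V$-coordinates of the chosen points, and then count dimensions. The paper phrases the upper bound via orbit sizes (each new point cuts the stabilizer by a factor of exactly $p$ as long as it is nontrivial), whereas your dual-space selection of $D$ evaluation functionals spanning $U^{*}$ is just a more explicit linear-algebra rendering of the same step.
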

\begin{proof}
The stabilizer of a point in $G_b=B_{a(p-1)-b}\rtimes V$ is a subgroup of $B_{a(p-1)-b}$ having index $p$, because the degree of the action is $n=p^{a+1}$ and $|V|=p^a$. Now, as all the orbits of $B_{a(p-1)-b}$ have cardinality $p$, we deduce that we need to fix $\dim_{\mathbb{F}_p}B_{a(p-1)-b}-1$ more points to obtain a basis.
\end{proof}

Now, fix $r\in \{0,\dots,a\}$. From Lemmas~\ref{l:1} and~\ref{l:2}, we have
\begin{equation}\label{eq:422}
\mu(G_{r(p-1)})b(G_{r(p-1)})=p^{a-r+1}\sum_{k=0}^{r(p-1)}{a\choose k}^{(p-1)}.
\end{equation}

Before dealing with the general case, we use~\eqref{eq:422} to make an explicit computation when $p=2$. Recall that, when $p=2$, we have ${a\choose k}^{(p-1)}={a\choose k}$. From~\cite[Exercise~9.42, page~492]{knuth}, we have
\begin{align*}
\sum_{k=0}^{r}{a\choose k}=2^{a\left(\lambda\log_2(1/\lambda)+(1-\lambda)\log_2\left(\frac{1}{1-\lambda}\right)\right)-\log_2(a)+O(1)},
\end{align*}
where $\lambda=r/a$.
Thus, from~\eqref{eq:42}, we get
\begin{align*}
b(G_r)\mu(G_r)&=2^{a(1-\lambda)+a\left(\lambda\log_2(1/\lambda)+(1-\lambda)\log_2\left(\frac{1}{1-\lambda}\right)\right)-\log_2(a)+O(1)}.
\end{align*}
As the degree of the permutation group is $n=2^{a+1}$, we deduce
\begin{align}\label{eq:limit}
\lim_{a\to \infty}\frac{\log_2(b(G_r)\mu(G_r))}{\log_2 (n)}&=1-\lambda-\lambda\log_2(\lambda)-(1-\lambda)\log_2\left(1-\lambda\right).
\end{align}
It is elementary to show that the maximum of the function appearing on the right hand side is attained when $\lambda=1/3$. With $\lambda=1/3$, the limit in~\eqref{eq:limit} equals $\log(3)/\log(2)$. Therefore, we have proved the following.
\begin{lemma}\label{l:lemmanew}
For every $\varepsilon>0$, there exists a transitive permutation $2$-group of degree $n$ such that
$\mu(G)b(G)\ge n^{\log(3)/\log(2)-\varepsilon}$.
\end{lemma}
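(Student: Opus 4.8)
The plan is to work with $p=2$ throughout and to produce the required group as $G=G_r$ for a suitable $r$ depending on $a$, then let $a\to\infty$. Everything reduces to analysing the exponent on the right-hand side of~\eqref{eq:limit}. First I would record that $G_r$ really is a transitive permutation $2$-group: with $p=2$ the base group $B=\mathbb{F}_2^V$ and the top group $V$ are both elementary abelian $2$-groups, so each $G_r=B_{a-r}\rtimes V$ is a $2$-group, and transitivity of every $G_b$ was already observed. Its degree is $n=2^{a+1}$, whence $\log_2 n=a+1$.

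Next I would pin down the optimal proportion. Writing $g(\lambda)=1-\lambda-\lambda\log_2\lambda-(1-\lambda)\log_2(1-\lambda)$ for the function appearing in~\eqref{eq:limit}, one differentiates to get $g'(\lambda)=-1-\log_2\bigl(\lambda/(1-\lambda)\bigr)$, which vanishes precisely when $\lambda/(1-\lambda)=1/2$, that is $\lambda=1/3$; since $g''(\lambda)=-\tfrac{1}{\ln 2}\bigl(\tfrac1\lambda+\tfrac1{1-\lambda}\bigr)<0$ on $(0,1)$, this critical point is the unique maximum. A direct substitution then gives $g(1/3)=\log_2 3=\log 3/\log 2$, matching the value asserted in the text.

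Finally I would turn the asymptotic in~\eqref{eq:limit} into the stated inequality. Fix $\varepsilon>0$ and take $r=\lfloor a/3\rfloor$, so that $\lambda=r/a\to 1/3$ as $a\to\infty$. By Lemma~\ref{l:1} we have $\log_2\mu(G_r)=a-r+1=a(1-\lambda)+1$, while Lemma~\ref{l:2} together with Knuth's estimate gives $\log_2 b(G_r)=a\bigl(-\lambda\log_2\lambda-(1-\lambda)\log_2(1-\lambda)\bigr)-\log_2 a+O(1)$; adding these and dividing by $\log_2 n=a+1$ yields
\[
\frac{\log_2\bigl(\mu(G_r)b(G_r)\bigr)}{\log_2 n}\longrightarrow g(1/3)=\frac{\log 3}{\log 2},
\]
using the continuity of $g$ at $1/3$. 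Hence for all sufficiently large $a$ the left-hand side exceeds $\log 3/\log 2-\varepsilon$, which rearranges to $\mu(G_r)b(G_r)\ge n^{\log 3/\log 2-\varepsilon}$; setting $G=G_r$ for such an $a$ proves the lemma.

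The main obstacle is precisely the bookkeeping in this last step. The estimate~\eqref{eq:limit} must be invoked with $r$ growing together with $a$ rather than held fixed, so one has to be certain that Knuth's asymptotic for $\sum_{k=0}^r\binom{a}{k}$ remains valid in the regime $r/a\to 1/3$ (where the binary entropy term is bounded away from its endpoint behaviour), and that the correction $-\log_2 a+O(1)$ genuinely becomes negligible after division by $\log_2 n\sim a$. Once the uniformity of the entropy asymptotic in this range is granted, the remaining manipulations are routine.
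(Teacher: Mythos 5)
Your argument is correct and follows essentially the same route as the paper: it combines Lemmas~\ref{l:1} and~\ref{l:2} (i.e.\ equation~\eqref{eq:422} with $p=2$) with the entropy estimate for $\sum_{k\le r}\binom{a}{k}$ from Knuth, and then optimises the exponent at $\lambda=1/3$ to obtain $\log 3/\log 2$. Your additional care --- verifying explicitly that $g$ is maximised at $\lambda=1/3$, taking $r=\lfloor a/3\rfloor$ so that $\lambda\to 1/3$, and flagging that the asymptotic must hold uniformly as $r$ grows with $a$ --- is a welcome tightening of details the paper leaves implicit.
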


We now turn to the general case. From~\eqref{eq:422}, we have
\begin{equation}\label{eq:42}
\mu(G_{r(p-1)})b(G_{r(p-1)})\ge p^{a-r+1}{a\choose r(p-1)}^{(p-1)}.
\end{equation}
\begin{proof}[Proof of Theorem~$\ref{thrm:main}$]
Assume $p>3$, $a$ is a multiple of $p-1$ and write $a=(p-1)a_1$. Let $c=\lfloor \sqrt{p}\rfloor$.
From~\cite[Theorem~5]{binomial2}, we deduce that, for $p>3$,
$${a\choose ca}^{(p-1)}$$
is asymptotic to 
$$\frac{\phi(x)}{\sqrt{2\pi a}}\left(\frac{1-x^p}{x-x^2}\right)^a,$$
as $a$ tends to infinity, where
\begin{align*}
\phi(x)&=\left(\frac{x}{(1-x)^2}-\frac{p^2x^p}{(1-x^p)^2}\right)^{-1/2},&& x=\frac{1}{d}+\frac{p(d-1)^2}{d^{p+2}}+\theta\frac{p^3}{d^{2p}},\quad d=1+\frac{1}{c},\quad |\theta|\le 1.
\end{align*}
In~\cite{binomial}, there is a much more informative asymptotic estimate on the extended binomial coefficients, but only for certain very special values of $c$.

Observe that  $\phi(x)$ depends only on $p$, but not on $a$.
Observe now that, from Lemmas~\ref{l:1} and~\ref{l:2} and from~\eqref{eq:42}, we have
\begin{align*}
\lim_{a\to\infty}\log_n(\mu(G_{ca})b(G_{ca}))&=\lim_{a\to\infty}\frac{\log_p(\mu(G_{ca})b(G_{ca}))}{\log_p(n)}\\
&\ge\lim_{a\to\infty}\frac{a-ca/(p-1)+1+\log_p(\phi(x)/\sqrt{2\pi a})+a\log_p((1-x^p)/(x-x^2))}{a+1}\\
&=1-\frac{\lfloor\sqrt{p}\rfloor}{p-1}+\lim_{a\to\infty}\frac{\log_p(\phi(x)/\sqrt{2\pi a})}{a+1}+\log_p((1-x^p)/(x-x^2))\\
&=1-\frac{\lfloor\sqrt{p}\rfloor}{p-1}+\log_p((1-x^p)/(x-x^2)),
\end{align*}
observe that $\log_p(\phi(x)/\sqrt{2\pi a})/(a+1)\to 0$ because, as we remarked above, $x$ does not depend on $a$.

Observe now that
\begin{align*}
\lim_{p\to \infty}\frac{p(d-1)^2}{d^{p+2}}&=\lim_{p\to\infty}\frac{1}{d^{p+2}}=\lim_{p\to\infty}\left(1+\frac{1}{c}\right)^{-p-2}=\lim_{p\to\infty}\left(\left(1+\frac{1}{c}\right)^{c}\right)^{-(p+2)/c}\\
&=\lim_{p\to\infty}e^{-p/\sqrt{p}}=0.
\end{align*}
Moreover, with an analogue argument, we deduce
\begin{align*}
\lim_{p\to \infty}\frac{p^3}{d^{2p}}&=\lim_{p\to\infty}\frac{p^3}{e^{2p/\sqrt{p}}}=0.
\end{align*}
This shows that $\lim_{p\to\infty}x=1$ and hence 
$$\lim_{p\to\infty}\log_p\left(\frac{1-x^p}{x-x^2}\right)=\lim_{p\to\infty}\log_p\left(\frac{1+x+\cdots +x^{p-1}}{x}\right)=1.$$

This gives
$$\lim_{p\to\infty}\lim_{a\to\infty}\log_n(\mu(G_{ca})b(G_{ca}))=2.\qedhere$$
\end{proof}
\thebibliography{10}
\bibitem{dixon_mortimer}J.~D.~Dixon,  B.~Mortimer, \textit{Permutation groups}, Graduate Texts in Mathematics \textbf{163}, Springer-Verlag, New York, 1996
\bibitem{knuth}R.~Graham, D.~Knuth, O.~Patashnik, \textit{Concrete Mathematics}, second edition, Pearson Education Limited, 1994.. 
\bibitem{ffields}X.~Hou, \textit{Lectures on Finite Fields}, Graduate studies in Mathematics \textbf{190}, AMS, 2018.
\bibitem{KPS}J.~Kempe, L.~Pyber, A.~Shalev, 
Permutation groups, minimal degrees and quantum computing,
\textit{Groups Geom. Dyn.} \textbf{1} (2007), 553--584.
\bibitem{binomial2}J.~Li, Asymptotic Estimate for the
Multinomial Coefficients, \textit{J. Integer Sequences} \textbf{23} (2020), Article~20.1.3.
\bibitem{fabio}F.~Mastrogiacomo, On the minimal degree and base size of finite primitive groups, \textit{J. Algebra and its applications}.
\bibitem{MRD}M.~Moscatiello, C.~Roney-Dougal, Base sizes of primitive permutation groups, \textit{Monatsh. Math.} \textbf{198}
(2022), 411-443.
\bibitem{binomial}T.~Neuschel, A note on extended binomial coefficients, \textit{J. Integer Sequences} \textbf{17} (2014), Article~14.10.4.
\end{document}